\theoremstyle{plain}
\newtheorem{theorem}{Theorem}
\newtheorem{lemma}{Lemma}
\newcommand{\ind}[1]{\mathbbm{1}_{\{#1\}}}   
\newcommand{\ccF}{\mathscr{F}}
\newcommand{\cJ}{\mathcal{J}}
\newcommand{\ccJ}{\mathcal{P}}
\newcommand{\TS}{\mathcal{S}}
\newcommand{\esinf}{{\rm ess}\,\!\inf}
\newcommand{\esup}{{\rm ess}\,\!\sup}
\newcommand{\Exp}{\mathsf{E}}
\newcommand{\Pro}{\mathsf{P}}
\newcommand{\LR}{\mathsf{L}}
\newcommand{\f}{\mathsf{f}}
\newcommand{\g}{\mathsf{g}}
\newcommand{\ignore}[1]{{}}
\newcommand{\pushright}[1]{\ifmeasuring@#1\else\omit\hfill$\displaystyle#1$\fi\ignorespaces}
\newcommand{\pushleft}[1]{\ifmeasuring@#1\else\omit$\displaystyle#1$\hfill\fi\ignorespaces}
\begin{document}
\title{Detecting Changes in Hidden Markov Models} 

\author{%
  \IEEEauthorblockN{George V.~Moustakides}
  \IEEEauthorblockA{Rutgers University and University of Patras\\
                    New Brunswick and Rion\\
                    NJ 08991, USA and 26500 Patras, Greece\\
                    Email: gm463@rutgers.edu and moustaki@upatras.gr}
}

\maketitle

\begin{abstract}
We consider the problem of sequential detection of a change in the statistical behavior of a hidden Markov model. By adopting a worst-case analysis with respect to the time of change and by taking into account the data that can be accessed by the change-imposing mechanism we offer alternative formulations of the problem. For each formulation we derive the optimum Shewhart test that maximizes the worst-case detection probability while guaranteeing infrequent false alarms.
\end{abstract}

\section{Introduction}
We consider a hidden Markov model (HMM) where $\{\xi_t\}_{t\geq1}$ is the observation process that is acquired sequentially and $\{z_t\}_{t\geq0}$ is a Markov process that controls the statistical behavior of $\{\xi_t\}$ but its state is hidden. Let also $\tau\in\{0,1,\ldots\}$ denote a \textit{changetime} with the processes $\{(\xi_t,z_t)\}$ following a nominal probability measure $\Pro_\infty$ up to and including time $\tau$ while, \textit{after} $\tau$, the probability measure switches to an alternative regime $\Pro_0$. This change induces a new measure which is denoted by $\Pro_\tau$ with $\Exp_\tau[\cdot]$ being reserved for the corresponding expectation. 

To be more precise, for $t\leq\tau$, we make the simplifying assumptions that the observations $\{\xi_t\}$ are i.i.d.~with a common pdf $\f_{\infty}(\xi)$ and the Markov process having a transition pdf $\g_{\infty}(z_t|z_{t-1})$. 
After the change the observations are conditionally independent and controlled by the Markov process. In particular $\xi_t$ conditioned on $z_t$ has a pdf $\f_0(\xi_t|z_t)$ while the Markov process has transition pdf $\g_0(z_t|z_{t-1})$. It is possible to have $\g_0(z_t|z_{t-1})=\g_{\infty}(z_t|z_{t-1})$, namely, the Markov process not to undergo any change. For simplicity, under the nominal measure the observations are assumed not to be controlled by the Markov process.

We would like to detect the onset of the change in the statistical behavior using a sequential strategy. We are therefore interested in defining a stopping time $T$ adapted to the filtration $\{\ccF^{\xi}_t\}_{t\geq0}$ generated by the observations, that is, $\ccF^{\xi}_t=\sigma\{\xi_1,\ldots,\xi_t\}$, to perform the detection. In order to select optimally $T$ we need to propose a suitable performance measure and properly optimize it. To derive our criterion we are going to extend the idea introduced in  \cite{Moustakides1}. Even though we can access only the observation process $\{\xi_t\}$ to perform detection, there is also a \textit{change-imposing mechanism} that must decide about the time $\tau$ to impose the change. And this mechanism may have access to \textit{a completely different set of data} to make this decision.

\section{Performance Measure}
As we suggested above $T$ is an $\{\ccF^\xi_t\}$-adapted stopping time. In order to capture the fact that the change-imposing mechanism may have access to completely different information, we are going to assume that $\tau$ is also a stopping time (the time that the data stop following the nominal model) but adapted to a filtration $\{\ccF^w_t\}$ where $\ccF^w_t=\sigma\{w_1,\ldots,w_t\}$. In other words, the change-imposing mechanism sequentially consults the data sequence $\{w_t\}$ and at each time instant \textit{makes a decision} as to whether it should impose a change or not. Both, ourselves that select $T$ and the change-imposing mechanism that selects $\tau$ are bound by a \textit{causality} constraint forbidding the use of any data from the future. Clearly process $\{w_t\}$, which is available to the change-imposing mechanism, may or may not include $\{\xi_t\}$ and it can be dependent or completely independent from the observations.

The change-imposing mechanism decides what is the best instant $\tau$ to impose the change while we decide what is the best time $T$ to stop and declare that a change took place. If $\phi(t,s)\geq0$ is a deterministic function expressing distance between or reward for the pair $(t,s)$ then we can use the conditional expectation $\Exp_\tau[\phi(T,\tau)|T>\tau]$ as a generic performance measure for the detection process. We condition on the event of no false alarms $\{T>\tau\}$ in order to compute the performance of $T$ only during successes since we intend to take care of false alarms differently.

Most of the time the rule that defines the stopping time $\tau$ is unknown, therefore the proposed performance measure cannot be computed and, more importantly, used to derive an optimum detection stategy. In such cases it is common to follow \textit{a worst-case analysis} with respect to $\tau$. In other words try to find the worst-case $\tau$ that will make the conditional expectation as unfavorable as possible to the detection goal. We have the following lemma that addresses this problem.
\begin{lemma}\label{lem:1}
Suppose that $T$ and $\tau$ are stopping times described as above, then
\begin{multline}
\inf_{\tau}\Exp_\tau[\phi(T,\tau)|T>\tau]\\
=\inf_{t\geq0}\esinf\Exp_t[\phi(T,t)|T>t,\ccF^w_t],
\label{eq:th1-1}
\end{multline}
The previous equality is also valid if we replace $\inf$ and $\esinf$ with $\sup$ and $\esup$.
\end{lemma}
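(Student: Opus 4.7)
The plan is to prove the two inequalities $\geq$ and $\leq$ separately. Write $h_t=\esinf\Exp_t[\phi(T,t)|T>t,\ccF^w_t]$ for the deterministic constant (the largest a.s.\ lower bound of the conditional expectation) and $h^*=\inf_{t\geq 0}h_t$ for the right-hand side of \eqref{eq:th1-1}.

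For the direction $\geq$, I would fix an arbitrary $\{\ccF^w_t\}$-stopping time $\tau$ and decompose the numerator $\Exp_\tau[\phi(T,\tau)\ind{T>\tau}]$ as a sum over the $\ccF^w_t$-measurable events $\{\tau=t\}$. The pivotal observation is that on $\{\tau=t\}$ the measure $\Pro_\tau$ coincides with $\Pro_t$, since both place the change exactly at time $t$; each summand therefore equals $\Exp_t[\phi(T,t)\ind{T>t}\ind{\tau=t}]$. Conditioning on $\ccF^w_t$ and using the product rule $\Exp_t[\phi(T,t)\ind{T>t}|\ccF^w_t]=\Pro_t(T>t|\ccF^w_t)\,\Exp_t[\phi(T,t)|T>t,\ccF^w_t]$, the last factor is bounded below a.s.\ by $h_t\geq h^*$. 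Summing and normalizing by $\Pro_\tau(T>\tau)=\sum_t\Pro_t(\tau=t,T>t)$ delivers $\Exp_\tau[\phi(T,\tau)|T>\tau]\geq h^*$, and taking $\inf_\tau$ yields the inequality.

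For the reverse direction, it suffices, for each fixed $t$ and each $\epsilon>0$, to construct a stopping time $\tau$ with $\Exp_\tau[\phi(T,\tau)|T>\tau]\leq h_t+\epsilon$. By the definition of essential infimum, the $\ccF^w_t$-measurable set $B_\epsilon=\{\Exp_t[\phi(T,t)|T>t,\ccF^w_t]<h_t+\epsilon\}$ has positive $\Pro_t$-measure. I would set $\tau=t$ on $B_\epsilon$ and $\tau=+\infty$ (or any sufficiently large constant) on its complement, which defines an $\{\ccF^w_t\}$-stopping time. Running the same decomposition as in the first direction, both numerator and denominator of $\Exp_\tau[\phi(T,\tau)|T>\tau]$ reduce to integrals over $B_\epsilon\cap\{T>t\}$ under $\Pro_t$, and the defining bound on $B_\epsilon$ yields the claim. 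Letting $\epsilon\to 0$ and then taking $\inf_t$ finishes the argument. The sup/esup statement follows by the identical scheme with all inequalities reversed, or equivalently by applying the inf version to $-\phi$.

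The main technical point I expect to be delicate is the identification $\Pro_\tau=\Pro_t$ on $\{\tau=t\}$, which underwrites the entire decomposition. Although intuitively clear from the description of the change-imposing mechanism, it requires a careful setup of the joint law of $(\{w_t\},\{z_t\},\{\xi_t\},\tau)$ so that, restricted to the $\ccF^w_t$-measurable event $\{\tau=t\}$, the change time really is $t$ under $\Pro_\tau$. Once this step is granted, the remainder is routine conditional-expectation bookkeeping.
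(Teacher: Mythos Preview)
Your proposal is correct and follows essentially the same route as the paper: both prove $\geq$ by decomposing over the $\ccF^w_t$-measurable events $\{\tau=t\}$, conditioning on $\ccF^w_t$, and bounding the resulting ratio, and both obtain attainability by letting the change-imposing mechanism set $\tau=t$ on a (near-)minimizing $\ccF^w_t$-event. Your $\epsilon$-construction for the reverse direction is a bit more careful than the paper's sketch (which simply asserts the infimum is attained at some $t_0$ and realization $\{w_1,\ldots,w_{t_0}\}$), and you rightly flag the identification $\Pro_\tau=\Pro_t$ on $\{\tau=t\}$ as the one step needing justification.
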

\begin{proof}
The proof is given in the Appendix.
\end{proof}

If we select $\phi(t,s)=(t-s)^+$ then we can define an extension of Lorden's measure \cite{Lorden} by computing the worst-case average detection delay as follows
$$
\cJ(T)=\sup_{t\geq0}\esup\Exp_t[T-t|T>t,\ccF^w_t].
$$
We must emphasize that this is \textit{not} the original Lorden measure since conditioning is with respect to the sigma-algebra $\ccF^w_t$ that controls $\tau$ and not $\ccF^\xi_t$ used in the original definition. Furthermore, in our approach the double maximization occurs naturally as a result of our worst-case analysis and not because of some arbitrary definition.

An alternative measure can be generated by evaluating the performance of $T$ using the probability of detecting the change \textit{immediately} after it occurs. In other words we are interested in the probability of the event $\{T=\tau+1\}$. For this reason we define $\phi(t,s)=\ind{t=s+1}$. If we apply Lemma\,\ref{lem:1} we can compute the worst-case detection probability
\begin{equation}
\ccJ(T)=\inf_{t\geq0}\esinf\Pro_t(T=t+1|T>t,\ccF^w_t),
\label{eq:measure}
\end{equation}
which in this work is the criterion we intend to adopt.

Returning to HMMs and using \eqref{eq:measure} we distinguish \textit{four} different cases depending on how $\{w_t\}$ is related to the existing data.
\vskip0.1cm
\noindent i)~The change-imposing mechanism accesses information that is independent from $\{(\xi_t,z_t)\}$. In this case in \eqref{eq:measure} there is no conditioning with respect to $\ccF^w_t$ since the probability does not depend on $\{w_t\}$. This yields
\begin{equation}
\ccJ_{\text{i}}(T)=\inf_{t\ge0}\Pro_t(T=t+1|T>t).
\label{eq:criter1}
\end{equation}
This is the Pollak-like criterion proposed in \cite{Pollak2}. 
\vskip0.1cm
\noindent ii)~The change-imposing mechanism accesses only the observations, then
\begin{equation}
\ccJ_{\text{ii}}(T)=\inf_{t\ge0}\esinf\Pro_t(T=t+1|T>t,\ccF^{\xi}_t).
\label{eq:criter2}
\end{equation}
This is the Lorden-like criterion proposed in \cite{Moustakides2}.
\vskip0.1cm
\noindent iii)~The change-imposing mechanism accesses only the state of the Markov process. This leads to
\begin{equation}
\ccJ_{\text{iii}}(T)=\inf_{t\ge0}\esinf\Pro_t(T=t+1|T>t,\ccF^{z}_t),
\label{eq:criter3}
\end{equation}
where $\ccF^z_t=\sigma\{z_1,\ldots,z_t\}$ corresponding to $w_t=z_t$.
\vskip0.1cm
\noindent iv)~The change-imposing mechanism accesses both, the observations and the state of the Markov process resulting in
\begin{equation}
\ccJ_{\text{iv}}(T)=\inf_{t\ge0}\esinf\Pro_t(T=t+1|T>t,\ccF^{\xi,z}_t),
\label{eq:criter4}
\end{equation}
where $\ccF^{\xi,z}_t=\sigma\{\xi_1,\ldots,\xi_t,z_1,\ldots,z_t\}$ corresponding to $w_t=(\xi_t,z_t)$.

For each criterion we can define a constrained optimization problem whose solution will provide the optimum $T$:
\begin{equation}
\sup_{T}\ccJ_l(T),
~\text{subject to:}~\Exp_\infty[T]\geq\gamma>1,
\label{eq:max-min}
\end{equation}
where $l=\text{i,\,ii,\,iii,\,iv}$. In other words we maximize the worst-case detection probability assuring at the same time that the average period between false alarms is lower bounded by a constant $\gamma>1$ that we can select.

The idea of maximizing the detection probability was first introduced in \cite{Bojdecki} under Shiryaev's \cite{Shiryaev} Bayesian formulation. In \cite{Pollak2} we have a variant of the original Pollak measure \cite{Pollak1} while a variant of Lorden's measure \cite{Lorden} was adopted in \cite{Moustakides2} for independent processes and in \cite{Moustakides3} for Markov. In this work we address the case of HMMs. The problem of change-detection in HMMs has been considered in the past in \cite{Fhu1,Fhu2,Fhu3} and from these results it is well understood that even the asymptotic analysis is extremely challenging, not always leading to outcomes that are practically implementable.

\section{Candidate Tests}
Let us first present the joint data pdf induced by a change occurring at some time $\tau=t\geq0$. For $0<s\leq t$ we have
\begin{multline*}
\f_t(\xi_s,\ldots,\xi_1,z_s,\ldots,z_0)=\\[2pt]
\f_\infty(\xi_s)\cdots\f_\infty(\xi_1)\times\g_{\infty}(z_s|z_{s-1})\cdots\g_{\infty}(z_1|z_0)\g_{\infty}(z_0),
\end{multline*}
while for $s>t$ the resulting pdf takes the form
\begin{multline*}
\f_t(\xi_s,\ldots,\xi_1,z_s,\ldots,z_0)=\\[2pt]
\f_0(\xi_s|z_s)\cdots\f_0(\xi_{t+1}|z_{t+1})
\times\g_0(z_s|z_{s-1})\cdots\g_0(z_{t+1}|z_t)\\[2pt]
\times\f_\infty(\xi_t)\cdots\f_\infty(\xi_1)
\times\g_{\infty}(z_t|z_{t-1})\cdots\g_{\infty}(z_1|z_0)\g_{\infty}(z_0),
\end{multline*}
where $\g_{\infty}(z_0)$ is the marginal pdf of $z_0$. The pdfs $\f_{\infty}(\xi_t)$, $\f_0(\xi_t|z_t)$, $\g_{\infty}(z_{t}|z_{t-1})$, $\g_{\infty}(z_0)$ are assumed known.

To simplify our presentation we are going to assume that $\g_\infty(z_0)$ is the stationary pdf for the transition pdf $\g_\infty(z_t|z_{t-1})$, namely $\int \g_\infty(z_t|z_{t-1})\g_\infty(z_{t-1})dz_{t-1}=\g_\infty(z_t)$. We can then define the following average probability density
\begin{equation}\textstyle
\bar{\f}_0^1(\xi_t)=\iint\f_0(\xi_t|z_t)\g_0(z_t|z_{t-1})\g_\infty(z_{t-1})dz_{t-1}dz_t,
\label{eq:mbifla12}
\end{equation}
which, when $\g_0(z_t|z_{t-1})=\g_\infty(z_t|z_{t-1})$, simplifies to
\begin{equation}\textstyle
\bar{\f}_0^1(\xi_t)=\int\f_0(\xi_t|z_t)\g_\infty(z_t)dz_t,
\label{eq:mbifla122}
\end{equation}
and will be used for Criteria i) and ii). For Criteria iii) and iv) we define
\begin{equation}\textstyle
\bar{\f}_0^2(\xi_t)=\iint\f_0(\xi_t|z_t)\g_0(z_t|z_{t-1})\pi(z_{t-1})dz_{t-1}dz_t,
\label{eq:mbifla34}
\end{equation}
where $\pi(z)$ is a pdf to be specified in the sequel.

With the help of the average pdfs $\bar{\f}_0^j(\xi_t),~j=1,2$ we can now define the candidate Shewhart stopping time as follows
\begin{equation}
\LR_j(\xi_t)=\frac{\bar{\f}_0^j(\xi_t)}{\f_\infty(\xi_t)},~~\TS_j=\inf\{t>0:~\LR_j(\xi_t)\geq\nu_j\}.
\label{eq:LR12}
\end{equation}
Threshold $\nu_j>0$ is selected to satisfy the false alarm constraint with equality, namely
\begin{equation}\textstyle
\Exp_\infty[\TS_j]=\frac{1}{\Pro_\infty(\LR_j(\xi_t)\geq\nu_j)}=\gamma,
\label{eq:mbifla1}
\end{equation}
Existence of $\nu_j$ is guaranteed since the equation $\Pro_\infty(\LR_j(\xi_1)\geq\nu_j)=\frac{1}{\gamma}$ has always a solution if we assume that $\LR_j(\xi_t)$ does not contain any atoms under $\Pro_\infty$. Otherwise, in order to satisfy \eqref{eq:mbifla1} we may need randomization every time $\LR_j(\xi_t)=\nu_j$.

We can also compute the corresponding worst-case detection probability of the two Shewhart schemes. For the first test, since there is no dependence on the past, we have
\begin{equation}\textstyle
\beta_1=\ccJ_{\rm i}(\TS_1)=\ccJ_{\rm ii}(\TS_1)=\int\bar{\f}_0^1(\xi_t)\ind{\LR_1(\xi_t)\geq\nu_1}d\xi_t.
\label{eq:beta1}
\end{equation}
For the second Shewhart test the analysis for finding the worst-case detection probability is slightly more involved. Consider first the conditional pdf 
$$\textstyle
\f_0(\xi_t|z_{t-1})=\int\f_0(\xi_t|z_t)\g_0(z_t|z_{t-1})dz_t
$$
then the worst-case detection probability satisfies
\begin{multline}
\beta_2=\ccJ_{\rm iii}(\TS_2)=\ccJ_{\rm iv}(\TS_2)=\\
\inf_{z_{t-1}}{\textstyle\int}\f_0(\xi_t|z_{t-1})\ind{\LR_2(\xi_t)\geq\nu_2}d\xi_t.
\label{eq:beta2}
\end{multline}

We recall that the second Shewhart test is defined in terms of an arbitrary probability density $\pi(z)$. This means that the stopping time $\TS_2$ and also the worst-case detection probability $\beta_2$ are functions of $\pi(z)$ as well. To specify $\pi(z)$, let $\mathcal{Z}$ denote its support, then $\pi(z)$ must be such that
\begin{align}
\begin{split}
&{\textstyle\int}\f_0(\xi_t|z_{t-1})\ind{\LR_2(\xi_t)\geq\nu_2}d\xi_t=\beta_2,~\text{for}~z_{t-1}\in\mathcal{Z}\\
&{\textstyle\int}\f_0(\xi_t|z_{t-1})\ind{\LR_2(\xi_t)\geq\nu_2}d\xi_t\geq\beta_2,~\text{for}~z_{t-1}\not\in\mathcal{Z}.
\end{split}
\label{eq:mbifla2}
\end{align}
In other words, $\pi(z)$ must put all its probability mass onto points for which the Shewhart test exhibits its worst-case performance. In fact \eqref{eq:mbifla2} is sufficient to define $\pi(z)$ uniquely.

\section{Max-Min Optimality}
In this section we will demonstrate that the stopping times $\TS_j,~j=1,2$, defined in \eqref{eq:LR12} solve the max-min constrained optimization problem defined in \eqref{eq:max-min}. In order to prove our claim we first need to find a suitable upper bound for $\ccJ_l(T)$. The following theorem provides the necessary expressions.

\begin{theorem}\label{th:1}
For any stopping time $T$ with $\Exp_\infty[T]<\infty$ we have
\[
\ccJ_l(T)\textstyle\!\leq\!\frac{\Exp_\infty[\LR_1(\xi_T)]}{\Exp_\infty[T]},\,l={\rm i,ii};~~
\ccJ_l(T)\textstyle\!\leq\!\frac{\Exp_\infty[\LR_2(\xi_T)]}{\Exp_\infty[T]},\,l={\rm iii,iv}.
\]
Additionally, if $T=\TS_j,~j=1,2$, then we have equality in the corresponding inequality.
\end{theorem}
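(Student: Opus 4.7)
The plan is to derive a per-time-step bound $\ccJ_l(T)\,\Pro_\infty(T>t)\le\Exp_\infty[\LR_j(\xi_{t+1})\ind{T=t+1}]$ and then sum over $t\ge 0$ using $\Exp_\infty[T]=\sum_{t\ge 0}\Pro_\infty(T>t)$ together with $\Exp_\infty[\LR_j(\xi_T)]=\sum_{t\ge 0}\Exp_\infty[\LR_j(\xi_{t+1})\ind{T=t+1}]$. By the definition of $\ccJ_l(T)$ as an infimum of essential infima, one has the a.s.\ pointwise bound $\ccJ_l(T)\,\ind{T>t}\le\Pro_t(T=t+1\mid T>t,\ccF^w_t)\,\ind{T>t}$. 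I would then multiply by a nonnegative $\ccF^w_t$-measurable weight $X_t$ and take $\Exp_t$; by the tower property, the $\Pro_t(T>t\mid\ccF^w_t)$ in the denominator of the conditional probability cancels against $\Exp_t[\ind{T>t}\mid\ccF^w_t]=\Pro_t(T>t\mid\ccF^w_t)$, leaving the clean inequality $\ccJ_l(T)\,\Exp_t[X_t\ind{T>t}]\le\Exp_t[X_t\ind{T=t+1}]$.

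Next, I would choose $X_t\equiv 1$ for $l=\mathrm{i},\mathrm{ii}$ and $X_t=\pi(z_t)/\g_\infty(z_t)$ for $l=\mathrm{iii},\mathrm{iv}$. Two short computations then close the per-step bound. First, since $\Pro_t$ and $\Pro_\infty$ agree on $\ccF^{\xi,z}_t$ and, under $\Pro_\infty$, $\xi_{1:t}$ is independent of $z_{0:t}$ with $z_t\sim\g_\infty$, one finds $\Exp_t[X_t\ind{T>t}]=\Exp_\infty[X_t]\,\Pro_\infty(T>t)=\Pro_\infty(T>t)$ (using $\int\pi=1$). Second, a change of measure on $\xi_{t+1}$—whose density, conditional on $\ccF^{\xi,z}_t$, is $\f_0(\xi_{t+1}\mid z_t)$ under $\Pro_t$ and $\f_\infty(\xi_{t+1})$ (independent of $z_t$) under $\Pro_\infty$—gives $\Exp_t[X_t\ind{T=t+1}]=\Exp_\infty[\LR_j(\xi_{t+1})\ind{T=t+1}]$: for $l=\mathrm{i},\mathrm{ii}$ the latent $z_t$ is integrated against its stationary $\g_\infty$, producing $\bar{\f}_0^1$ and hence $\LR_1$; for $l=\mathrm{iii},\mathrm{iv}$ the weight $X_t$ replaces $\g_\infty$ by $\pi$ in that integration, producing $\bar{\f}_0^2$ and hence $\LR_2$. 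Summing over $t\ge 0$ then delivers the stated upper bounds.

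For the equality clause with $T=\TS_j$, I would exploit that the $\xi_t$ are i.i.d.\ under $\Pro_\infty$, so $\{T\ge t\}$ depends only on $\xi_1,\ldots,\xi_{t-1}$ and is independent of $\LR_j(\xi_t)$. Then $\Exp_\infty[\LR_j(\xi_T)]=\sum_{t\ge 1}\Pro_\infty(T\ge t)\,\Exp_\infty[\LR_j(\xi_1)\ind{\LR_j(\xi_1)\ge\nu_j}]=\beta_j\,\Exp_\infty[\TS_j]$, identifying the first factor with $\beta_j$ via \eqref{eq:beta1} for $j=1$ and via the defining property \eqref{eq:mbifla2} of $\pi$ combined with the mixture representation of $\bar{\f}_0^2$ for $j=2$; together with $\ccJ_l(\TS_j)=\beta_j$ from \eqref{eq:beta1}--\eqref{eq:beta2}, this yields equality in the theorem. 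The main technical subtlety is case iii: because $\{T>t\}$ is not $\ccF^z_t$-measurable, the conditional-probability quotient does not collapse a.s.\ on its own, and one truly needs the $\Exp_t$ step with the $\pi(z_t)/\g_\infty(z_t)$ weight so that the tower property applied to $\ccF^z_t$ removes the leftover $\Pro_t(T>t\mid\ccF^z_t)$ denominator.
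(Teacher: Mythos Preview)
Your proposal is correct and follows essentially the same route as the paper: derive the per-step inequality $\ccJ_l(T)\Pro_\infty(T>t)\le\Exp_\infty[\LR_j(\xi_{t+1})\ind{T=t+1}]$ via a change of measure, using the weight $X_t=\pi(z_t)/\g_\infty(z_t)$ (the paper's $\varpi(z_t)$) for cases iii, iv, then sum over $t$. Your treatment of the equality clause is a bit more explicit than the paper's appeal to the equalizer property, but the substance is the same.
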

\vskip-0.1cm
\begin{proof} The proof is given in the Appendix.
\end{proof}

The next theorem optimizes the upper bounds proposed in Theorem\,\ref{th:1}.
\begin{theorem}\label{th:2}
If $T$ is any stopping time satisfying the false alarm constraint, then
\[
\textstyle\frac{\Exp_\infty[\LR_j(\xi_T)]}{\Exp_\infty[T]}\leq\beta_j,~j=1,2,
\]
where $\beta_1,\beta_2$ are defined in \eqref{eq:beta1}, \eqref{eq:beta2} respectively.
\end{theorem}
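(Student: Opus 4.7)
The approach I propose is to derive a Neyman--Pearson style pointwise bound and combine it with a tower expansion in the time index $t$. The starting point is the observation that $\beta_j = \Exp_\infty[\LR_j(\xi_1) \ind{\LR_j(\xi_1) \geq \nu_j}]$ for both $j=1,2$: for $j=1$ this is exactly \eqref{eq:beta1}, while for $j=2$ one writes $\bar{\f}_0^2$ as a $\pi$-mixture of $\f_0(\xi|z_{t-1})$ and uses \eqref{eq:mbifla2}, which says the integral of $\f_0(\xi|z_{t-1})\ind{\LR_2(\xi)\geq\nu_2}$ equals $\beta_2$ on the support of $\pi$. Combined with the calibration $\Pro_\infty(\LR_j(\xi_1)\geq\nu_j)=1/\gamma$ from \eqref{eq:mbifla1}, these supply every ingredient the argument needs.

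The key pointwise step I would establish first is that for every measurable set $A$,
\[
\int_A \bar{\f}_0^j(\xi)\, d\xi \,\leq\, \beta_j + \nu_j\!\left(\int_A \f_\infty(\xi)\, d\xi - \frac{1}{\gamma}\right).
\]
This is Neyman--Pearson in ratio form: the sign of $\bar{\f}_0^j(\xi)-\nu_j\f_\infty(\xi)$ coincides with $\LR_j(\xi)\geq\nu_j$, so integrating the difference over any set $A$ is upper bounded by its integral over $\{\LR_j\geq\nu_j\}$, which by the identities above equals $\beta_j-\nu_j/\gamma$.

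I would then expand $\Exp_\infty[\LR_j(\xi_T)]=\sum_{t\geq1}\Exp_\infty[\LR_j(\xi_t)\ind{T=t}]$ and condition on $\ccF^{\xi}_{t-1}$. Because $\{T\geq t\}\in\ccF^{\xi}_{t-1}$ and $\xi_t$ is $\Pro_\infty$-independent of $\ccF^{\xi}_{t-1}$, the event $\{T=t\}$ restricted to $\{T\geq t\}$ has the form $\{\xi_t\in A_t\}$ for some set $A_t\in\ccF^{\xi}_{t-1}$. Applying the pointwise inequality with $A=A_t$ and taking unconditional expectations yields
\[
\Exp_\infty[\LR_j(\xi_t)\ind{T=t}]\leq\beta_j\Pro_\infty(T\geq t)+\nu_j\!\left(\Pro_\infty(T=t)-\tfrac{1}{\gamma}\Pro_\infty(T\geq t)\right).
\]
Summing over $t$ and using $\sum_t\Pro_\infty(T=t)=1$ and $\sum_t\Pro_\infty(T\geq t)=\Exp_\infty[T]$ gives $\Exp_\infty[\LR_j(\xi_T)]\leq\beta_j\Exp_\infty[T]+\nu_j(1-\Exp_\infty[T]/\gamma)$. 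The false alarm constraint $\Exp_\infty[T]\geq\gamma$ makes the second term nonpositive, producing the claimed bound. The only delicate point in the whole argument is the measurability decomposition $\{T=t\}\cap\{T\geq t\}=\{\xi_t\in A_t\}\cap\{T\geq t\}$ together with interchanging sum and expectation (where one needs $\Exp_\infty[T]<\infty$, which is the nontrivial regime anyway); the Neyman--Pearson and Wald-style pieces are then mechanical.
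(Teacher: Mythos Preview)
Your argument is correct, and in fact your summed inequality
\[
\Exp_\infty[\LR_j(\xi_T)]\;\le\;\beta_j\,\Exp_\infty[T]+\nu_j\!\left(1-\tfrac{\Exp_\infty[T]}{\gamma}\right)
\]
is, after rearrangement, exactly the paper's unconstrained inequality \eqref{eq:app1}. The difference lies in how that inequality is obtained. The paper first randomizes at time~$0$ to reduce to $\Exp_\infty[T]=\gamma$, then introduces $\nu_j/\gamma$ as a Lagrange multiplier and \emph{asserts} that the resulting unconstrained optimal stopping problem (i.i.d.\ observations, linear running cost, terminal reward $\LR_j(\xi_T)$) is solved by $\TS_j$ with value $-\nu_j$; the optimal-stopping computation is left implicit. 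You instead prove \eqref{eq:app1} directly, via the Neyman--Pearson inequality $\int_A(\bar{\f}_0^j-\nu_j\f_\infty)\le\beta_j-\nu_j/\gamma$ applied termwise after conditioning on $\ccF^\xi_{t-1}$, and then dispense with the randomization step altogether by using the constraint in its inequality form. Your route is thus more self-contained and slightly sharper in its handling of the constraint; the paper's route has the advantage of identifying $\TS_j$ as the minimizer of a standard optimal-stopping cost, which connects the result to a broader framework. One cosmetic point: writing ``$A_t\in\ccF^\xi_{t-1}$'' is not quite right---$A_t$ is a Borel subset of the range of $\xi_t$ that depends $\ccF^\xi_{t-1}$-measurably on $\xi_1,\ldots,\xi_{t-1}$; the argument goes through as you intend once this is stated precisely.
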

\vskip-0.1cm
\begin{proof} The proof is highlighted in the Appendix.
\end{proof}

Combining Theorems\,\ref{th:1} and \ref{th:2}, immediately assures optimality of the Shewhart tests. In particular for $l={\rm i,\,ii}$ we have
$$
\ccJ_l(T){\textstyle\leq\frac{\Exp_\infty[\LR_1(\xi_T)]}{\Exp_\infty[T]}}\leq\beta_1
=\ccJ_{\rm i}(\TS_1)=\ccJ_{\rm ii}(\TS_1),
$$
while for $l={\rm iii,\,iv}$ we conclude
$$
\ccJ_l(T){\textstyle\leq\frac{\Exp_\infty[\LR_2(\xi_T)]}{\Exp_\infty[T]}}\leq\beta_2
=\ccJ_{\rm iii}(\TS_2)=\ccJ_{\rm iv}(\TS_2).
$$
These two relationships establish optimality of the two Shewhart tests. In the next section we offer an example involving an interesting HMM.

\section{Example}
We consider the case of a Gaussian process whose mean is controlled by a Gaussian Markov process. Specifically,
let the observations $\{\xi_t\}$ before the change be i.i.d. with pdf $\f_\infty(\xi_t)\sim\mathcal{N}(0,1)$ and after the change assume $\f_0(\xi_t|z_t)\sim\mathcal{N}(z_t,1)$. The process $\{z_t\}$ is unobservable and of the form $z_t=\mu+v_t$ where $\mu$ is a constant denoting the mean of $z_t$ and $\{v_t\}$ is an AR(1) Gaussian process with $v_t$ being conditionally Gaussian of the form $v_t\sim\mathcal{N}(\alpha v_{t-1},\sigma^2)$ with $|\alpha|<1$.

For the stationary pdf we have $\g_\infty(z)\sim\mathcal{N}(\mu,\frac{\sigma^2}{1-\alpha^2})$. Since in this example we assume that the Markov process $\{z_t\}$ does not change, if we focus on the solution for Criteria i) and ii), we use \eqref{eq:mbifla122} to compute
\begin{equation}\textstyle
\bar{\f}_0^1(\xi)=\int\f_0(\xi|z)\g_\infty(z)dz~\sim\mathcal{N}\big(\mu,1+\frac{\sigma^2}{1-\alpha^2}\big).
\label{eq:ex0}
\end{equation}
Following \eqref{eq:LR12} we can easily establish that the optimal Shewhart test is equivalent to
\begin{equation}\textstyle
\TS_1=\inf\left\{t>0: \left|\xi_t+\mu\frac{1-\alpha^2}{\sigma^2}\right|\geq\nu_1\right\}.
\label{eq:Shewhart-i-ii}
\end{equation}
Threshold $\nu_1$ is related to the average false alarm period $\gamma$ through \eqref{eq:mbifla1} which takes the form
\begin{equation}\textstyle
\Phi\big(\mu\frac{1-\alpha^2}{\sigma^2}-\nu_1\big)+\Phi\big(-\mu\frac{1-\alpha^2}{\sigma^2}-\nu_1\big)=\frac{1}{\gamma},
\label{eq:ARL-i-ii}
\end{equation}
while the worst-case detection probability becomes
\begin{equation}\textstyle
\beta_1=\Phi\left(\frac{\mu\left(1+\frac{1-\alpha^2}{\sigma^2}\right)-\nu_1}{\sqrt{1+\frac{\sigma^2}{1-\alpha^2}}}\right)+\Phi\left(-\frac{\mu\left(1+\frac{1-\alpha^2}{\sigma^2}\right)+\nu_1}{\sqrt{1+\frac{\sigma^2}{1-\alpha^2}}}\right).
\label{eq:PD-i-ii}
\end{equation}

Let us now consider Criteria iii) and iv). We focus on the computation of \eqref{eq:mbifla34} and perform it in two steps. The first involves the computation of the conditional pdf
\begin{multline}\textstyle
\f_0(\xi_t|z_{t-1})=\int\f_0(\xi_t|z_t)\g_0(z_t|z_{t-1})dz_t\\
\sim\mathcal{N}\big((1-\alpha)\mu+\alpha z_{t-1},1+\sigma^2\big).
\label{eq:ex2}
\end{multline}
The next step consists in finding the pdf $\pi(z)$. We are going to assume that $\pi(z)$ puts all its mass on the single point $z=-\mu\frac{1-\alpha}{\alpha}$. This implies that $\bar{\f}_0^2(\xi)\sim\mathcal{N}(0,1+\sigma^2)$. We can then verify that the resulting Shewhart test is equivalent to
\begin{equation}
\TS_2=\inf\{t>0: |\xi_t|\geq\nu_2\}
\label{eq:Shewhart-iii-iv}
\end{equation}
with the threshold satisfying the false alarm constraint
\begin{equation}
2\Phi(-\nu_2)=\frac{1}{\gamma}.
\label{eq:ARL-iii-iv}
\end{equation}
Of course, in order for our selection of $\pi(z)$ to be correct we need to show validity of \eqref{eq:mbifla2}. Therefore we must prove that $\Pro_0(|\xi_t|\geq\nu_2|z_{t-1})$ has a minimum for $z_{t-1}=-\mu\frac{1-\alpha}{\alpha}$. Using \eqref{eq:ex2} the desired probability is
\begin{multline*}\textstyle
\Pro_0(|\xi_t|\geq\nu_2|z_{t-1})=
\Phi\left(\frac{-\nu_2+((1-\alpha)\mu+\alpha z_{t-1})}{\sqrt{1+\sigma^2}}\right)\\\textstyle
+\Phi\left(\frac{-\nu_2-((1-\alpha)\mu+\alpha z_{t-1})}{\sqrt{1+\sigma^2}}\right)
\end{multline*}
which is clearly minimized when $(1-\alpha)\mu+\alpha z_{t-1}=0$ with the minimum being equal to
\begin{equation}\textstyle
\beta_2=2\Phi\big(-\frac{\nu_2}{\sqrt{1+\sigma^2}}\big).
\label{eq:PD-iii-iv}
\end{equation}
The latter also constitutes the optimum worst-case detection probability for the Shewhart test in \eqref{eq:Shewhart-iii-iv}. It is worth mentioning that the Shewhart stopping time $\TS_2$ is UMP with respect to $\alpha,\mu$ and $\sigma^2$ since, as we can see, it does not require knowledge of these parameters. What is equally interesting is that the optimum worst-case detection probability $\beta_2$ is only a function of $\sigma^2$ and not of $\alpha,\mu$. It is only $\pi(z)$ that depends on these two parameters.

Suppose now that we erroneously assume that the change-imposing mechanism does not access the state of the Markov process when in reality it does. In this case we will be using $\TS_1$ from \eqref{eq:Shewhart-i-ii} instead of $\TS_2$ from \eqref{eq:Shewhart-iii-iv}. For $\TS_1$ it is not difficult to verify that the worst-case detection probability is equal to
\begin{equation}\textstyle
\tilde{\beta}_1=2\Phi\big(-\frac{\nu_1}{\sqrt{1+\sigma^2}}\big).
\label{eq:PD-i-ii-error}
\end{equation}

A similar erroneous assumption can occur when we consider the change-imposing mechanism to be able to access the Markov state when in reality it does not. Consequently by using $\TS_2$ from \eqref{eq:Shewhart-iii-iv} we need to compute its performance under the pdf in \eqref{eq:ex0}. This yields
\begin{equation}\textstyle
\tilde{\beta}_2=\Phi\left(\frac{(-\nu_2+\mu)\sqrt{1-\alpha^2}}{\sqrt{1-\alpha^2+\sigma^2}}\right)+
\Phi\left(-\frac{(\nu_2+\mu)\sqrt{1-\alpha^2}}{\sqrt{1-\alpha^2+\sigma^2}}\right).
\label{eq:PD-iii-iv-error}
\end{equation}
Clearly \eqref{eq:PD-iii-iv-error} must be compared against the optimum \eqref{eq:PD-i-ii} while \eqref{eq:PD-i-ii-error} against the optimum \eqref{eq:PD-iii-iv}. 

\begin{figure}[b]
\vskip-0.4cm
\centerline{\includegraphics[width=\hsize]{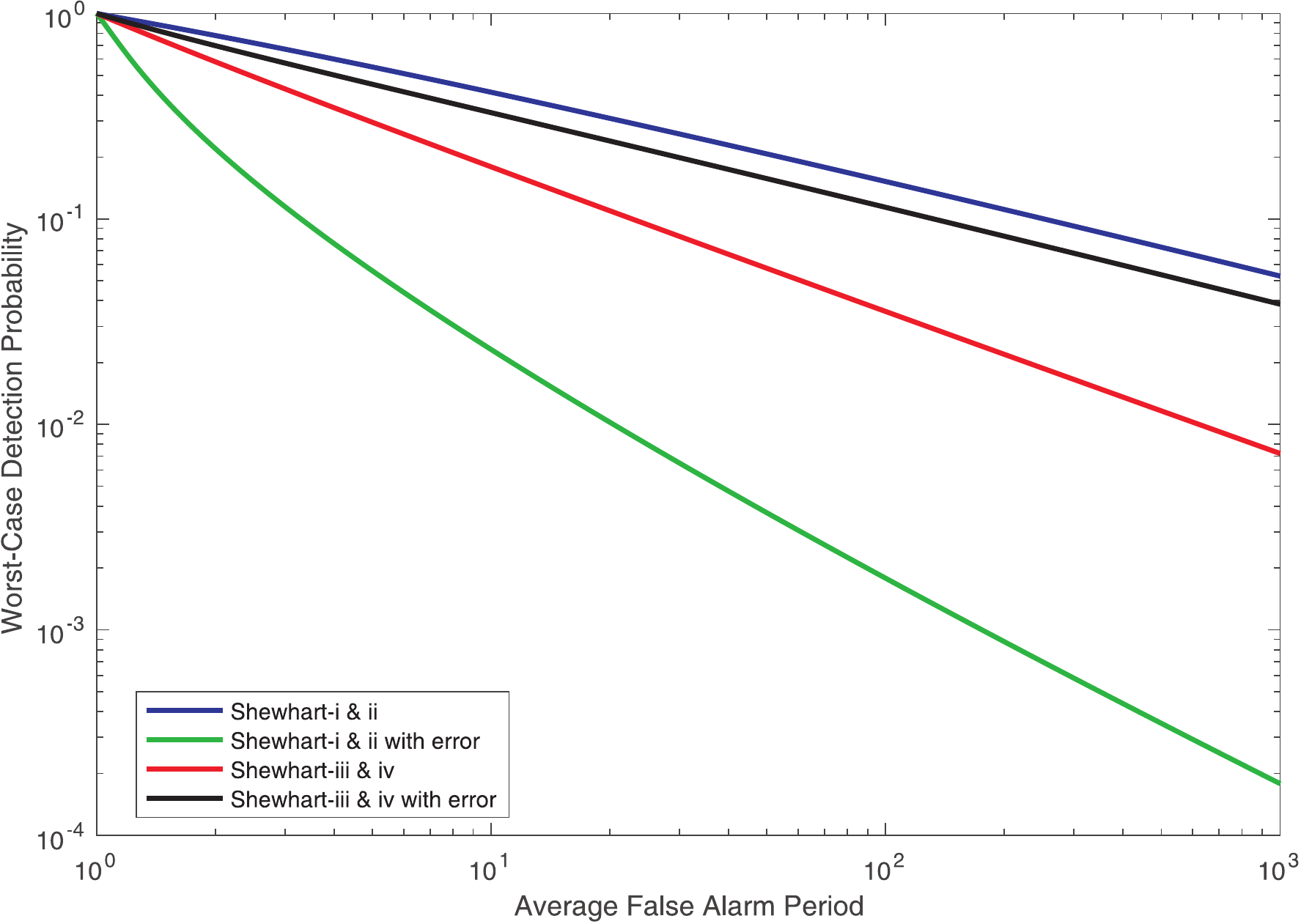}}
\caption{Detection probability as a function of average false alarm period of Shewhart test when change-imposing mechanism does not access the Markov process state and we correctly assume it does not (blue); when it does not and we erroneously assume it does (black); when it does and we correctly assume it does (red) and finally when it does and we erroneously assume it does not (green).}
\label{fig:1}
\end{figure}
For a numerical comparison, let $\alpha=0.5$, $\mu=1$, $\sigma^2=0.5$ with $\gamma$ ranging from 1 to 1000. Fig.\,\ref{fig:1} depicts the corresponding detection probabilities. The graph in blue corresponds to the change-imposing mechanism having no access to the Markov process and we correctly assume that it does not. This means that we plot $\beta_1$ from \eqref{eq:PD-i-ii} against $\gamma$ computed from \eqref{eq:ARL-i-ii}. If this assumption is wrong and the change-imposing mechanism can actually access the Markov state then we have a severe performance degradation depicted by the graph in green where we plot $\tilde{\beta}_1$ from \eqref{eq:PD-i-ii-error} against $\gamma$ from \eqref{eq:ARL-i-ii}. 

If we now use the test in \eqref{eq:Shewhart-iii-iv} and the change-imposing mechanism can indeed access the Markov state then the red graph depicts the worst-case detection probability $\beta_2$ from \eqref{eq:PD-iii-iv} as a function of $\gamma$ from \eqref{eq:ARL-iii-iv}. In case we made a mistake in our judgement and the change-imposing mechanism cannot access the Markov state then the same test has a performance depicted by the black curve where we plot $\tilde{\beta}_2$ from \eqref{eq:PD-iii-iv} in terms of $\gamma$ from \eqref{eq:ARL-iii-iv}. 

By using the Shewhart test $\TS_2$ in \eqref{eq:Shewhart-iii-iv}, which is obtained under more severe assumptions we do not lose much as compared to the optimum \eqref{eq:Shewhart-i-ii} if our assumption about the access capabilities of the change-imposing mechanism is incorrect. On the other hand, we guard ourselves against a hostile change-imposing mechanism when the latter can access all the available information. If, however, we assume that the change-imposing mechanism cannot access the Markov state and use $\TS_1$, this assumption can be catastrophic if it is wrong.

\section{Conclusion}
We considered the sequential change-detection problem for HMM which is known for being challenging. By introducing a generalized version of Lorden's performance measure we were able to come up with the optimum solution that maximizes the worst-case detection probability. This result is interesting since it is the first time we were able to obtain a solution for a performance measure that is different from the classical measures adopted so far in the literature.

\section*{Acknowledgement}
This work was supported by the US National Science Foundation under Grant CIF\,1513373, through Rutgers University.

\appendix
\noindent\textbf{Proof of Lemma\,\ref{lem:1}:} Since $\tau$ is a $\{\ccF^w_t\}$-adapted stopping time we have that $\{\tau=t\}$ is $\ccF^w_t$-measurable consequently we can write
\vskip-0.6cm
\begin{multline*}
\Exp_\tau[\phi(T,\tau)|T>\tau]=\\[3pt]\textstyle
\frac{\sum_{t=0}^\infty\Exp_\infty[\Exp_t[\phi(T,t)\ind{T>t}|\ccF^w_t]\ind{\tau=t}]}
{\sum_{t=0}^\infty\Exp_\infty[\Pro_t(T>t|\ccF^w_t)\ind{\tau=t}]}\geq\\[3pt]\textstyle
\inf_{t\geq0}\frac{\Exp_\infty[\Exp_t[\phi(T,t)\ind{T>t}|\ccF^w_t]\ind{\tau=t}]}
{\Exp_\infty[\Pro_t(T>t|\ccF^w_t)\ind{\tau=t}]}\geq\\[3pt]\textstyle
\inf_{t\geq0}\esinf\frac{\Exp_t[\phi(T,t)\ind{T>t}|\ccF^w_t]}
{\Pro_t(T>t|\ccF^w_t)}=\\[3pt]\textstyle
\inf_{t\geq0}\esinf\Exp_t[\phi(T,t)|T>t,\ccF^w_t].
\end{multline*}
This lower bound is in fact attainable. Suppose that the last double minimization is achieved by some $t_0$ (minimization over $t$) and realization $\{w_1,\ldots,w_{t_0}\}$ (minimization over the data), then the change-imposing mechanism can simply impose a change at $\tau=t_0$ when the specific combination of data occur. If there are more choices yielding the same lower bound then it can perform randomization between them.\hfill \QED 
\vskip0.1cm
\noindent\textbf{Proof of Theorem\,\ref{th:1}:} Let us consider first Criterion i). We have
\begin{multline*}\textstyle
\Pro_t(T=t+1|T>t)=\frac{\Pro_t(T=t+1)}{\Pro_t(T>t)}\\[4pt]\textstyle
=\frac{\Exp_\infty\big[\frac{\f_0(\xi_{t+1}|z_{t+1})\g_0(z_{t+1}|z_t)}{\f_\infty(\xi_{t+1})\g_\infty(z_{t+1}|z_t)}\ind{T=t+1}\big]}{\Pro_\infty(T>t)},
\end{multline*}
where the denominator takes this specific form because the event $\{T>t\}$ is $\ccF^\xi_t$-measurable and therefore happens before the change. Since $\{T=t+1\}$ is $\ccF^\xi_{t+1}$-measurable we need to average out $z_{t+1},\ldots,z_0$ conditioned on $\ccF^\xi_{t+1}$. This is easy since under $\Pro_\infty$ the observations and the Markov process are independent. Indeed this conditional expectation becomes
\begin{multline*}\textstyle
\Exp_\infty\left[\frac{\f_0(\xi_{t+1}|z_{t+1})\g_0(z_{t+1}|z_t)}{\f_\infty(\xi_{t+1})\g_\infty(z_{t+1}|z_t)}|\ccF_{t+1}^\xi\right]=\\ \textstyle
\int\!\frac{\f_0(\xi_{t+1}|z_{t+1})}{\f_\infty(\xi_{t+1})}\g_0(z_{t+1}|z_t)\g_\infty(z_t|z_{t-1})\cdots \g_\infty(z_0)dz_{t+1}\cdots dz_0\\
=\textstyle
\int\frac{\f_0(\xi_{t+1}|z_{t+1})}{\f_\infty(\xi_{t+1})}\g_0(z_{t+1}|z_t)\g_\infty(z_t)dz_{t+1}dz_t
=\frac{\bar{\f}^1_0(\xi_{t+1})}{\f_\infty(\xi_{t+1})},
\end{multline*}
where we used the fact that $\g_\infty(z)$ is the stationary pdf.
Since $\ccJ_{\rm i}(T)\leq\Pro_t(T=t+1|T>t)$ we can conclude that
$$
\textstyle
\ccJ_{\rm i}(T)\Pro_\infty(T>t)\leq\Exp_\infty\left[\frac{\bar{\f}_0^1(\xi_{t+1})}{\f_\infty(\xi_{t+1})}\ind{T=t+1}\right].
$$
Summing over $t=0,1,\ldots$ yields the desired inequality. The previous inequality becomes an equality when $T=\TS_1$ because the Shewhart test is an equalizer, namely, $\Pro_t(\TS_1=t+1|\TS_1>t)$ is a constant independent from $t$.

For Criterion ii) derivations are similar. Indeed we can write
$$
\textstyle
\ccJ_{\rm ii}(T)\Pro_\infty(T>t|\ccF_t^\xi)\leq\Exp_\infty\left[\frac{\bar{\f}^1_0(\xi_{t+1})}{\f_\infty(\xi_{t+1})}\ind{T=t+1}|\ccF_t^\xi\right].
$$
Taking expectation on both sides with respect to the $\Pro_\infty$ measure and summing over $t$ yields the desired result. Again we have equality when $T=\TS_1$ because $\TS_1$ is an equalizer.

Let us now consider Criterion iii), we have
$$\textstyle
\ccJ_{\rm iii}(T)\Exp_\infty[\ind{T>t}|\ccF^z_t]\leq\Exp_\infty\left[\frac{\f_0(\xi_{t+1}|z_{t+1})\g_0(z_{t+1}|z_t)}{\f_\infty(\xi_{t+1})\g_\infty(z_{t+1}|z_t)}|\ccF_t^z\right].
$$
Multiplying both sides with $\varpi(z_t)\geq0$ and averaging with respect to $\Pro_\infty$ yields
\begin{multline*}\textstyle
\ccJ_{\rm iii}(T)\Exp_\infty[\varpi(z_t)\ind{T>t}]\leq\\[1pt]\textstyle
\Exp_\infty\left[\frac{\f_0(\xi_{t+1}|z_{t+1})\g_0(z_{t+1}|z_t)}{\f_\infty(\xi_{t+1})\g_\infty(z_{t+1}|z_t)}\varpi(z_t)\ind{T=t+1}\right].
\end{multline*}
For the left hand side we have
\begin{multline*}
\Exp_\infty[\varpi(z_t)\ind{T>t}]=\Exp_\infty[\Exp_\infty[\varpi(z_t)|\ccF_t^\xi]\ind{T>t}]\\[1pt]
\textstyle
=\left(\int\varpi(z_t)\g_\infty(z_t)dz_t\right)\Exp_\infty[\ind{T>t}]=\Exp_\infty[\ind{T>t}],
\end{multline*}
where we define $\pi(z)=\varpi(z)\g_\infty(z)$ and, without loss of generality, we assume that $\int\pi(z_t)dz_t=1$. For the right hand side we can similarly write
\begin{multline*}\textstyle
\Exp_\infty\big[\frac{\f_0(\xi_{t+1}|z_{t+1})\g_0(z_{t+1}|z_t)}{\f_\infty(\xi_{t+1})\g_\infty(z_{t+1}|z_t)}\varpi(z_t)\ind{T=t+1}\big]=\\[1pt]\textstyle
\Exp_\infty\big[\Exp_\infty\big[\frac{\f_0(\xi_{t+1}|z_{t+1})\g_0(z_{t+1}|z_t)}{\f_\infty(\xi_{t+1})\g_\infty(z_{t+1}|z_t)}\varpi(z_t)|\ccF_{t+1}^\xi\big]\ind{T=t+1}\big]=\\[1pt]\textstyle
\Exp_\infty\big[\frac{\iint \f_0(\xi_{t+1}|z_{t+1})\g_0(z_{t+1}|z_t)\varpi(z_t)\g_\infty(z_t)dz_t dz_{t+1}}{\f_\infty(\xi_{t+1})}\ind{T=t+1}\big]\\[1pt]
\textstyle =\Exp_\infty\big[\frac{\bar{\f}^2_0(\xi_{t+1})}{\f_\infty(\xi_{t+1})}\ind{T=t+1}\big],
\end{multline*}
where in the last equality we use the definition in \eqref{eq:mbifla34}. The desired inequality can be shown as in the previous cases. Finally, when $T=\TS_2$ we have equality because $\pi(z)$ puts all its mass on values of $z_t$ where the essential infimum is attained and because the resulting value is independent from $t$ (equilizer). Similarly we can prove the upper bound for Criterion iv).\hfill\QED
\vskip0.1cm
\noindent\textbf{Proof of Theorem\,\ref{th:2}:} The first step in the proof consists in observing that we can limit ourselves to stopping times $T$ that satisfy the false alarm constraint with equality, that is, $\Exp_\infty[T]=\gamma$. Indeed if $\Exp[T]>\gamma$ then we can perform a randomization before taking any observations as to whether we should stop at time 0 with probability $p$ or continue according to the stopping time $T$ with probability $1-p$. This generates a new stopping $\tilde{T}$ that satisfies $\Exp_\infty[\tilde{T}]=(1-p)\Exp_\infty[T]$ and therefore we can select $p$ so that $\tilde{T}$ satisfies $\Exp_\infty[\tilde{T}]=\gamma$. On the other hand we can verify that
$$\textstyle
\frac{\Exp_\infty[\LR_j(\xi_{\tilde{T}})]}{\Exp_\infty[\tilde{T}]}=\frac{\Exp_\infty[\LR_j(\xi_{T})]}{\Exp_\infty[T]}.
$$

Because of the previous observations we need to prove that $\Exp_\infty[\beta_jT-\LR_j(\xi_{T})]\geq0$ over all $T$ satisfying the false alarm constraint with equality. In fact it will be sufficient if we consider the unconstrained version
\begin{equation}\textstyle
\Exp_\infty\big[(\beta_j-\frac{\nu_j}{\gamma})T-\LR_j(\xi_T)\big]\geq -\nu_j
\label{eq:app1}
\end{equation}
obtained by subtracting $\frac{\nu_j}{\gamma}\Exp_\infty[T]$ from the left and $\nu_j$ from the right side. We can now assume that there is no constraint on $T$ and minimize the left hand side in \eqref{eq:app1} over $T$. Since $T$ is $\{\ccF_t^\xi\}$-adapted and $\{\xi_t\}$ under $\Pro_\infty$ is i.i.d., this optimal stopping problem can be easily solved and we can show that the optimum stopping time is $\TS_j$ defined in \eqref{eq:LR12}. By direct computation we can also verify that the minimum value of the left hand side in \eqref{eq:app1} is indeed equal to $-\nu_j$.
\hfill\QED

\end{document}